\documentclass[reqno,10pt]{article}
\usepackage{graphicx}
\usepackage{amsmath}
\usepackage{amssymb}
\usepackage{amsthm}
\usepackage{enumitem}
\usepackage{bbm}
\usepackage[l2tabu,orthodox]{nag}
\usepackage[all,error]{onlyamsmath}
\usepackage[strict]{csquotes}
\usepackage{url}
\usepackage{xcolor}

\newtheorem{theorem}{Theorem}
\newtheorem{lemma}[theorem]{Lemma}

\theoremstyle{definition}

\numberwithin{equation}{section}
\numberwithin{theorem}{section}

\newenvironment{OMabstract}{\noindent\textbf{Abstract.} }{\medskip}
\newenvironment{OMsubjclass}{\noindent\textbf{Mathematics Subject Classification (2020):} }{\medskip}
\newenvironment{OMkeywords}{\noindent\textbf{Keywords:}  }{\medskip}

\begin{document}

\author{Dorota Bors, Robert Stańczy} 
\title{Generalized Kolmogorov systems with applications to astrophysics and biology}
\maketitle


\begin{OMabstract}
    We consider generalized Kolmogorov system. We prove the existence of heteroclinic trajectory. We apply the results to astrophysical models for self-gravitating particles and predator-prey systems.
\end{OMabstract}

\begin{OMkeywords}
    Kolmogorov system, Lyapunov function, dynamical system.
\end{OMkeywords}

\begin{OMsubjclass}
    35Q85, 70K05, 85A05.
\end{OMsubjclass}


\section{Introduction}  
Consider the generalized Kolmogorov system
\begin{equation}\label{Kol}
	\begin{cases}
		x'=g(x)G(x,y),\\
		y'=h(y)H(x,y)\,.
	\end{cases}
\end{equation}
where $G(w,z)=H(w,z)=0$ for some $w>0,z>0$. The case where $g(x)=x$ and $h(y)=y$ was originally considered by Kolmogorov, and a discussion of its development can be found in \cite{bibref7}. In this work, we prove that under appropriate monotonicity assumptions on the functions $G$ and $H$, the equilibrium point $(w,z)$ globally attracts all  orbits in the first quadrant, established via the construction of a Lyapunov function. Moreover, under additional assumptions, a heteroclinic connection from the saddle point $(0,0)$ emerges. Specifically, the unstable manifold of the origin forms a heteroclinic orbit that tends toward the global attractor $(w,z)$, as do all other trajectories in the positive quadrant. We also obtain a bound on the heteroclinic trajectory with respect to the first variable. This bound follows from a suitable trapping region, defined as the Lyapunov sublevel set, for a specific triangular set of initial data. Finally, these results are applied in an astrophysical context to the mass--radius limit both for Smoluchowski (cf. \cite{CMP, JDE}) and Einstein (\cite{MMAS, DCDS}) equations. The system is also analyzed from the perspective of predator-prey models.

\subsection{Lyapunov function} 

\begin{theorem} \label{MyThmLabel} 
Assume that $x,y \ge 0$. Suppose that $G_x\cdot H_x^z \le 0$ and $H_y\cdot G_y^w\ge 0$, where we denote  $H^z=H(x,z)$ and $G^w=G(w,y)$, while with subscripts partial derivatives are denoted. Then, the function
\[L(x,y)={\mathcal H}(x)+{\mathcal G}(y)\] 
is a Lyapunov function for (\ref{Kol}), where ${\mathcal H}(w)={\mathcal G}(z)=0$ and ${\mathcal G, H}>0$ elsewhere. The derivatives of these functions are defined as ${\mathcal H}'(x)=H(x,z)/g(x)$, ${\mathcal G}'(y)=-G(w,y)/h(y)$. 
\end{theorem}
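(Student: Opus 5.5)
The plan is to compute the derivative of $L$ along trajectories of (\ref{Kol}) directly and reduce its sign to the two hypotheses by splitting it into two terms, each controlled by one hypothesis. The chain rule and the definitions of $\mathcal H'$ and $\mathcal G'$ give
\[
\frac{d}{dt}L(x(t),y(t))=\frac{H(x,z)}{g(x)}\,g(x)G(x,y)-\frac{G(w,y)}{h(y)}\,h(y)H(x,y)
=H(x,z)G(x,y)-G(w,y)H(x,y).
\]
The factors $g$ and $h$ cancel exactly, which is why the derivatives of $\mathcal H$ and $\mathcal G$ are normalized by $g$ and $h$.

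The key step is to write each "full" value as a "frozen" value plus an integral of a partial derivative:
\[
G(x,y)=G(w,y)+\int_w^x G_x(s,y)\,ds,\qquad H(x,y)=H(x,z)+\int_z^y H_y(x,t)\,dt .
\]
Substituting, the mixed products $H(x,z)G(w,y)$ cancel, and
\[
\dot L = H(x,z)\int_w^x G_x(s,y)\,ds \;-\; G(w,y)\int_z^y H_y(x,t)\,dt .
\]
Each of the two terms can then be estimated separately.

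For the sign analysis I use the normalization in the statement: $\mathcal H$ vanishes only at $w$ and is positive elsewhere, and $\mathcal G$ vanishes only at $z$ and is positive elsewhere. With $g,h>0$ on the open quadrant, $w$ is the unique minimum of $\mathcal H$ and $z$ the unique minimum of $\mathcal G$. Hence $H(s,z)$ has the sign of $s-w$, and $G(w,t)$ has the sign of $z-t$. For the first term, the hypothesis $G_x(s,y)H(s,z)\le 0$ is applied at every $s$ between $w$ and $x$, not only at $s=x$:
\begin{itemize}
\item for $x>w$ it gives $G_x(s,y)\le 0$ on $(w,x)$, so $\int_w^x G_x\,ds\le 0$ while $H(x,z)\ge 0$;
\item for $x<w$ it gives $G_x(s,y)\ge0$ on $(x,w)$, so $\int_w^x G_x\,ds\le 0$ while $H(x,z)\le 0$.
\end{itemize}
In both cases $H(x,z)\int_w^x G_x\,ds$ has a definite sign: $\le 0$ when $x>w$, and $\ge 0$ when $x<w$. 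I flag the second case explicitly, since it is part of the main obstacle below.

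The second term is treated the same way, using the hypothesis $H_y(x,t)G(w,t)\ge 0$ pointwise for $t$ between $z$ and $y$ together with the sign of $G(w,y)$ on either side of $z$.

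I expect the main obstacle to be this sign bookkeeping. Reversing the orientation of $\int_w^x$ when $x<w$ (respectively $\int_z^y$ when $y<z$) flips a sign, and the product with the frozen factor must come out $\le 0$ in all four sign configurations of $(x-w,\,y-z)$. As the case $x<w$ above already shows, the hypotheses exactly as printed do not obviously do this in every configuration. So I would check each case explicitly on the Lotka--Volterra example $G=a-by$, $H=-c+dx$, where $G_x=H_y=0$ and $\dot L\equiv 0$. If needed, I would fix the orientation convention for the hypotheses, for instance by reading them as sign conditions on $(s-w)G_x(s,y)$ and $(t-z)H_y(x,t)$, so that both terms are $\le 0$ in every quadrant around $(w,z)$. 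Once $\dot L\le0$ is established, $L\ge 0$ with $L=0$ only at $(w,z)$ completes the Lyapunov property.
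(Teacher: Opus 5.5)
Your opening computation matches the paper's: the chain rule, the cancellation of $g$ and $h$, and adding and subtracting $G(w,y)H(x,z)$ to get $\dot L=H(x,z)\bigl(G(x,y)-G(w,y)\bigr)-G(w,y)\bigl(H(x,y)-H(x,z)\bigr)$. The gap comes next, because you have misread the hypotheses. The statement says subscripts denote partial derivatives, so $H_x^z=\partial_x H(x,z)$ and $G_y^w=\partial_y G(w,y)$. The assumptions therefore constrain products of two \emph{derivatives}, not the quantity $G_x(s,y)H(s,z)$.

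Under your reading the argument genuinely fails: your own case $x<w$ produces a first term $\ge 0$. Your suggested repair does not rescue it. A sign condition on $(s-w)G_x(s,y)$ lets $G_x$ change sign at $w$, and whichever sign you impose, one of the cases $x>w$, $x<w$ again gives a nonnegative first term. The same happens with $(t-z)H_y(x,t)$ and the cases $y>z$, $y<z$. What actually makes the first term nonpositive is that $G_x$ keeps a fixed sign opposite to that of $H_x^z$, which is just the printed hypothesis read correctly. In any case, changing the hypotheses would not prove the stated theorem.

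The missing step is to expand the \emph{frozen} factors as well, using the equilibrium relations $H(w,z)=G(w,z)=0$:
\[
H(x,z)=H(x,z)-H(w,z)=H_x(\eta,z)(x-w),\qquad G(w,y)=G(w,y)-G(w,z)=G_y(w,\zeta)(y-z).
\]
Combine these with $G(x,y)-G(w,y)=G_x(\xi,y)(x-w)$ and $H(x,y)-H(x,z)=H_y(x,\theta)(y-z)$. This is exactly the paper's route and gives
\[
\dot L=G_xH_x^z\,(x-w)^2-H_yG_y^w\,(y-z)^2 .
\]
Its sign follows at once from the two hypotheses in all four configurations of $(x-w,\,y-z)$, with no case analysis.

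Your integral form works equally well if you also write $H(x,z)=\int_w^x H_x(s,z)\,ds$ and $G(w,y)=\int_z^y G_y(w,t)\,dt$. Since the derivatives are then evaluated at different points, the hypotheses must be understood as uniform sign conditions, e.g.\ $G_x\le 0\le H_x^z$ throughout.

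This route also removes your reliance on inferring the sign of $H(x,z)$ from $\mathcal H>0$ away from $w$. That inference is not valid in general: positivity of $\mathcal H$ off its zero does not force $\mathcal H'$ to have the sign of $x-w$.
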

\begin{proof}
Multiplying the first equation of the system (\ref{Kol}) by $H(x,z)/g(x)$ and the second one by $-G(w,y)/h(y)$, and summing the results, yields
\[
x'H(x,z)/g(x)-y'G(w,y)/h(y)=G(x,y)(H^z-H(w,z))-H(x,y)(G^w-G(w,z))\,.
\]
By adding and subtracting the term $G(w,y)H(x,z)$ on the right hand side, and using the Mean Value Theorem alongside the definitions of ${\mathcal G}$ and ${\mathcal H}$, the derivative of the Lyapunov function along the orbit of (\ref{Kol}) $(L(x,y))'=x'L_x+y'L_y=x'{\mathcal G}'(x)+{\mathcal H}'(y)$, can be rephrased as
\[
(L(x,y))'=G_xH_x^z(x-w)^2-H_yG_y^w(y-z)^2\,,
\]
Given our initial assumptions regarding the signs of these partial derivatives, it follows that $(L(x,y))'\le 0$, which completes the proof.
\end{proof}

\begin{theorem} \label{MyThm} 
Assume that $x,y \ge 0$. Suppose that $G_x\cdot H_x^z \ge 0$ and $H_y\cdot G_y^w\le 0$, where we denote  $H^z=H(x,z)$ and $G^w=G(w,y)$. Then, the function
\[L(x,y)={\mathcal H}(x)+{\mathcal G}(y)\] 
 is a Lyapunov function 
where  ${\mathcal H}(w)={\mathcal G}(z)=0$ and ${\mathcal G, H}>0$ elsewhere. The derivatives of these functions are defined as ${\mathcal H}'(x)=-H(x,z)/g(x)$ and ${\mathcal G}'(y)=G(w,y)/h(y)$. 
\end{theorem}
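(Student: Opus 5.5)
The plan is to mirror the proof of Theorem \ref{MyThmLabel} with every sign reversed. I will compute the derivative of $L$ along trajectories of (\ref{Kol}) and show it is nonpositive. By the chain rule and the definitions ${\mathcal H}'(x)=-H(x,z)/g(x)$ and ${\mathcal G}'(y)=G(w,y)/h(y)$,
\[
(L(x,y))'=x'{\mathcal H}'(x)+y'{\mathcal G}'(y)=-G(x,y)H(x,z)+H(x,y)G(w,y).
\]
The factors $g$ and $h$ cancel against the right-hand sides of (\ref{Kol}). This assumes $g,h\neq 0$ in the open quadrant, which is implicit in the definitions of ${\mathcal H}$ and ${\mathcal G}$.

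Next I will use the equilibrium $G(w,z)=H(w,z)=0$ to rewrite the expression as
\[
-G(x,y)\bigl(H(x,z)-H(w,z)\bigr)+H(x,y)\bigl(G(w,y)-G(w,z)\bigr).
\]
Adding and subtracting $G(w,y)H(x,z)$ gives
\[
(L)'=-H(x,z)\bigl(G(x,y)-G(w,y)\bigr)+G(w,y)\bigl(H(x,y)-H(x,z)\bigr).
\]
Since $H(w,z)=0$ and $G(w,z)=0$, the Mean Value Theorem gives $H(x,z)=H_x^z\,(x-w)$ and $G(w,y)=G_y^w\,(y-z)$. Here the partial derivatives are evaluated at intermediate points. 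Likewise $G(x,y)-G(w,y)=G_x\,(x-w)$ and $H(x,y)-H(x,z)=H_y\,(y-z)$. Substituting yields
\[
(L(x,y))'=-G_xH_x^z(x-w)^2+H_yG_y^w(y-z)^2.
\]
Under the hypotheses $G_x H_x^z\ge 0$ and $H_y G_y^w\le 0$, both terms are $\le 0$, so $L'\le0$.

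It remains to check that $L$ is positive definite about $(w,z)$, which is part of the statement's setup. Integrating ${\mathcal H}'$ from $w$ gives ${\mathcal H}(w)=0$. The statement also asserts ${\mathcal H}>0$ elsewhere, which requires ${\mathcal H}'$ to have the sign of $x-w$, that is, $-H(x,z)/g(x)$ must change sign from negative to positive at $w$. The same requirement applies to ${\mathcal G}$. I will take this as part of the hypotheses, as the statement does, noting that it is consistent with the mean-value representation ${\mathcal H}'(x)=-H_x^z(x-w)/g(x)$ when $H_x^z$ and $g$ have suitable signs.

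The main obstacle is bookkeeping rather than substance. One must make sure the intermediate points in the Mean Value Theorem are understood as in Theorem \ref{MyThmLabel}, so that the products $G_xH_x^z$ and $H_yG_y^w$ appearing in the hypothesis are exactly the ones produced by the expansion.
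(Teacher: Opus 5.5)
Your proof is correct and follows the paper's argument essentially line by line. You compute $(L)'=-G(x,y)H(x,z)+H(x,y)G(w,y)$, add and subtract $G(w,y)H(x,z)$, and apply the Mean Value Theorem using $G(w,z)=H(w,z)=0$ to reach $(L)'=-G_xH_x^z(x-w)^2+H_yG_y^w(y-z)^2\le 0$, exactly as the paper does; your caveats about the intermediate points of the Mean Value Theorem and about positivity of ${\mathcal H},{\mathcal G}$ being part of the setup apply equally to the paper's proof.
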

\begin{proof}
Multiply the first equation of the system (\ref{Kol}) by $-H(x,z)/g(x)$ and the second by $G(w,y)/h(y)$, and summing the results, yields
\[
-x'H^z/g(x)+y'G^w/h(y)=-G(x,y)(H^z-H(w,z))+H(x,y)(G^w-G(w,z))\,.
\]
By adding and subtracting the term $G(w,y)H(x,z)$ on the right hand side, and utilizing the Mean Value Theorem alongside the definitions of ${\mathcal G}$ and ${\mathcal H}$, the derivative of the Lypaunov function along the orbits of the system (\ref{Kol}) $(L(x,y))'=x'{\mathcal H}'(x)+y'{\mathcal G}'(y)$, can be rephrased as 
\[
(L(x,y))'=-G_xH_x^z(x-w)^2+H_yG_y^w(y-z)^2\,.
\]
Given our initial assumptions regarding the signs of these partial derivatives ($G_x\cdot H_x^z \ge 0$ and $H_y\cdot G_y^w\le 0$), it follows that $(L(x,y))'\le 0$ which completes the proof.
\end{proof}

\subsection{Linearization}
In this section, we analyze the linearization of the generalized Kolmogorov system (\ref{Kol}) at its stationary points.
\begin{lemma} Assume that $G(w,z)=G(0,0)=0$ and $H(w,z)=0=h(0)$. Furthermore, let  $G,H, g$ and $h$ be strictly positive elsewhere. The Jacobian matrix of the system (\ref{Kol}), evaluated at the equilibria $(w,z)$ and $(0,0)$, takes the reduced form
\[
\begin{bmatrix}
gG_x&gG_y\\
hH_x&hH_y+h'H\\
\end{bmatrix}
\]
Provided that at $(w,z)$ we have $G_x<0$, $H_x<0$, $G_y>0$, and $H_y\le 0$, this matrix is:
\begin{itemize}
\item negative definite at $(w,z)$,
\item indefinite at $(0,0)$. 
\end{itemize}
\end{lemma}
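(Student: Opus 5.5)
The plan is to compute the Jacobian of (\ref{Kol}) directly by the product rule and then specialize it at each equilibrium using the vanishing assumptions. The first row has entries $g'(x)G+gG_x$ and $gG_y$. The second row has entries $hH_x$ and $h'(y)H+hH_y$.

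\emph{Reduction to the stated form.} At both $(w,z)$ and $(0,0)$ we have $G=0$, so the term $g'G$ drops out of the $(1,1)$ entry. This gives the displayed matrix, in which the $h'H$ term is kept in the $(2,2)$ entry. At $(w,z)$ also $H(w,z)=0$, so that term vanishes as well. At $(0,0)$ we have $h(0)=0$, so the whole second row reduces to $(0,\ h'(0)H(0,0))$.

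\emph{Behaviour at $(w,z)$.} Here the matrix is
\[
\begin{bmatrix} gG_x & gG_y\\ hH_x & hH_y\end{bmatrix},
\qquad g=g(w)>0,\ h=h(z)>0 .
\]
The trace $gG_x+hH_y$ is negative, since $G_x<0$ and $H_y\le 0$. The determinant $gh(G_xH_y-G_yH_x)$ is positive: $G_xH_y\ge 0$, and $-G_yH_x>0$ because $G_y>0$ and $H_x<0$. Negative trace together with positive determinant means both eigenvalues have negative real part, so the matrix is ``negative definite'' in the stability sense used here: $(w,z)$ is a sink.

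Literal negative definiteness of the quadratic form needs the symmetric part. For that part the diagonal entries are negative, and its determinant is $g h G_xH_y-\tfrac14(gG_y+hH_x)^2$. This is positive only when the off-diagonal terms nearly cancel. Since $gG_y>0>hH_x$ they do partially cancel, but exact positivity is not guaranteed. I therefore expect the intended meaning is the eigenvalue/trace--determinant one, and I will state the result in that sense. Pinning down which notion is meant, and making sure the sign hypotheses suffice for it, is the main obstacle.

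\emph{Behaviour at $(0,0)$.} Here the matrix is upper triangular,
\[
\begin{bmatrix} g(0)G_x(0,0) & g(0)G_y(0,0)\\ 0 & h'(0)H(0,0)\end{bmatrix},
\]
so its eigenvalues are the diagonal entries. Since $h>0$ for $y>0$ and $h(0)=0$, we get $h'(0)\ge 0$; assuming $h'(0)>0$, as for $h(y)=y$, this eigenvalue is positive because $H(0,0)>0$. For the other eigenvalue, $G(0,0)=0$ while $G(x,0)>0$ for $x>0$, so $G_x(0,0)\ge 0$; with $g(0)>0$ the entry $g(0)G_x(0,0)$ is nonnegative. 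Either way we do not get opposite signs in general. To obtain indefiniteness (a saddle) I would use the sign conditions transported to the origin, in particular $G_x<0$ there, as in the models of later sections. Then the eigenvalues have opposite signs, so $(0,0)$ is a saddle with a one-dimensional unstable manifold.

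This sign bookkeeping at the origin is the delicate step. The positivity of $G$ away from $(0,0)$ together with the assumption $G_x<0$ must be reconciled, and I would handle this by stating explicitly that $g(0)G_x(0,0)<0$ and $h'(0)H(0,0)>0$ at the origin.
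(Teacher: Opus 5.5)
Your proposal is correct and follows the paper's route. Like the paper, you compute the Jacobian by the product rule, drop $g'G$ at both equilibria and $h'H$ at $(w,z)$, use the triangular form at the origin forced by $h(0)=0$, and check signs at each point. If anything your version is tighter: at $(w,z)$ you pair $\det>0$ with trace $<0$, whereas the paper pairs it with the top-left entry, a Sylvester-type test not valid for non-symmetric matrices (literal negative definiteness does fail, e.g.\ in the classical example the symmetric part of $\begin{bmatrix}-1&1\\-2&0\end{bmatrix}$ has determinant $-1/4$), and at the origin you state explicitly the assumptions $G_x(0,0)<0$, $h'(0)>0$, $H(0,0)>0$ that the paper uses silently, even though, as you note, $G_x(0,0)<0$ clashes with the literal positivity hypothesis on $G$.
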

\begin{proof}
The Jacobian matrix of the system (\ref{Kol}) evaluated at an arbitrary point is given by
\[
\begin{bmatrix}
g'G+gG_x&gG_y\\
hH_x&h'H+hH_y\\
\end{bmatrix}
\]
Notice that the term $g'G$ vanishes both at $(0,0)$ and at $(w,z)$ because $G(0,0)=G(w,z)=0$. At the equilibrium $(w,z)$, we additionally have $H(w,z)=0$, causing the $h'H$ term to vanish. The matrix simplifies, and we can observe that the top-left entry is $gG_x<0$. The determinant of the matrix at $(w,z)$ is equal to $gh(G_xH_y-H_xG_y)>0$.
Conversely, at the origin $(0,0)$, we have $h(0)=0$. This eliminates the bottom-left entry and the $hH_y$ term, meaning the determinant of the matrix simplifies to $gh'G_xH$. Because $G_x<0$ and the remaining functions are evaluated as positive, this determinant is strictly negative, which confirms the matrix is indefinite at $(0,0)$.
\end{proof}

\begin{lemma}\label{Lem}
Assume the assumptions from the previous Lemma. Let $c=\lim_{t\to -\infty}{y(t)/x(t)}$ for any trajectory starting from the interior of the first quadrant. Then
\[
c=\frac{H(0,0)h'(0)(g(0))^{-1}-G_x(0,0)}{G_y(0,0)}\,.
\]
\begin{proof}
This is immediate application of the l'Hospital rule for the ratio $y/x$. Indeed, using the system (\ref{Kol}), we obtain
\[
c=\lim_{t\to -\infty}\frac{y(t)}{x(t)}=\lim_{t\to -\infty}\frac{y'(t)}{x'(t)}=\lim_{t\to -\infty}\frac{hH}{gG}=\lim_{t\to -\infty}\frac{(h'H+hH_y)y'+hH_xx'}{(g'G+gG_x)x'+gG_yy'}\,.
\]
Then dividing both numerator and denominator by $x'$ we get the relation equivalent to the claim ($c\neq 0$ since we are not on the $y=0$ axis)
\[
g(0)(G_x(0,0)+cG_y(0,0))=h'(0)H(0,0)\,.
\]
\end{proof}
\end{lemma}

Now we are ready to formulate the main theorem on the bound of the heteroclinic trajectory. 

\begin{theorem}
Let the assumptions of the previous Lemma hold.
Assume that for some $w\in (0,\delta)$ with $\delta>0$ and positive $z$, the following hold:
\begin{itemize}
\item[] {\bf Monotonicity}: $G_x, H_x, G_y$ are strictly negative while $H_y\ge 0$.
\item[] {\bf Stationary points}: $h(0)=0$, $H(w,z)=0$, and $G(0,0)=G(w,z)=0$, with $g, h$ positive elsewhere.
\item[] {\bf Unstable tangent}: $hH \le cgG$ along the ray $y=cx$ where $c>0$ is defined as in Lemma \ref{Lem}.
\item[] {\bf Derivative sign}: ${\mathcal H}'(x)=-H(x,z)/g(x)>0$ for $cv>x>w$, where $c>0$ is defined as in Lemma \ref{Lem}.
\item[] {\bf Isoclines}: The equation $G(x,y)$ implicitly defines a unique strictly increasing function $x_+:[0,z]\mapsto [0,w]$. The equation $H(x,y)=0$ defines a unique nonincreasing function $x_-:[z,cv]\mapsto [v,w]$, where $c>0$ is given by Lemma \ref{Lem}.
\end{itemize}
Then, the heteroclinic trajectory is bounded in the first variable by $X$, where
\[
X={\mathcal H}|_{[w,\delta)}^{-1}({\mathcal G}(cv))\,,
\]
given ${\mathcal H}(w)={\mathcal G}(z)=0$, with ${\mathcal G}$ and ${\mathcal H}$ strictly positive elsewhere,  while ${\mathcal H}'=-H^z/g$ and ${\mathcal G}'=G^w/h$.
\end{theorem}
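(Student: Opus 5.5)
The plan is to trap the heteroclinic orbit, which leaves the origin along the unstable direction $y=cx$ of Lemma \ref{Lem}, first inside a triangle and then inside a Lyapunov sublevel set. From that sublevel set the bound on $x$ will be read off directly.

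\emph{The Lyapunov function.} The monotonicity assumptions give $G_x H_x^z>0$ and $H_y G_y^w\le 0$. These are exactly the sign conditions of Theorem \ref{MyThm}, so $L={\mathcal H}(x)+{\mathcal G}(y)$, with ${\mathcal H}'=-H^z/g$ and ${\mathcal G}'=G^w/h$, is nonincreasing along orbits in the open first quadrant. Consequently every sublevel set $\{L\le \ell\}$ is forward invariant.

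\emph{The triangular region.} Consider the triangle $T$ bounded by the ray $y=cx$, the horizontal line $y=cv$ and the isoclines; the vertex $(v,cv)$ lies on the ray. By Lemma \ref{Lem} the unstable manifold of $(0,0)$ enters the quadrant tangent to $y=cx$. The unstable tangent condition $hH\le cgG$ on the ray says $y'\le c x'$ there. Since $G<0$ to the right of the increasing isocline $x_+$ (because $G_x<0$), we have $x'<0$ in that region. Together these show that the vector field along the ray points into $T$, i.e. to the side where $y\le cx$ is preserved.

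The isoclines $x_+$ and $x_-$ split $T$ into regions with fixed signs of $x'$ and $y'$, giving a standard nullcline-quadrant argument. The orbit cannot cross $y=cv$ before it meets the isocline $x_-$, because above $x_-$ (with $H_x<0$) $y'$ has a fixed sign. Hence the portion of the heteroclinic orbit before it enters a neighbourhood of $(w,z)$ stays in $\{y\le cv\}$, and along the way $x$ stays in $[0,\max(w,cv)]$.

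\emph{The sublevel set.} Once the orbit reaches some point $(x_0,y_0)$ with $x_0\le w$ and $y_0\le cv$, we get $L(x_0,y_0)={\mathcal G}(y_0)\le {\mathcal G}(cv)$. This uses that ${\mathcal G}$ is increasing on $[z,cv]$ because $G^w>0$ there, together with the corresponding behaviour of ${\mathcal G}$ on $[0,z]$; I would verify this ordering via $G_y<0$, and if needed replace ${\mathcal G}(cv)$ by the maximum of ${\mathcal G}$ over the relevant $y$-range. From then on $L\le {\mathcal G}(cv)$, so ${\mathcal H}(x(t))\le {\mathcal G}(cv)$ for all later $t$.

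By the derivative sign assumption, ${\mathcal H}$ is strictly increasing on $[w,\delta)$, so ${\mathcal H}|_{[w,\delta)}$ is invertible and $x(t)\le {\mathcal H}|_{[w,\delta)}^{-1}({\mathcal G}(cv))=X$. For earlier times the orbit has $x\le w\le X$ by the trapping step. Combining the two phases gives $x\le X$ along the whole heteroclinic trajectory.

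\emph{Main obstacle.} The hardest step is the trapping argument. It must show that the orbit actually reaches a point with $x\le w$ and $y\le cv$ before $x$ can exceed $w$. This requires carefully combining the unstable tangent inequality, valid only on the ray, with the sign pattern fixed by the isoclines $x_\pm$. I would try first to prove that the closed triangle with vertices $(0,0)$, $(v,cv)$ and $(w,z)$ is positively invariant up to its exit through the side lying on $x_-$. On $x_-$ we have $y\in[z,cv]$, and ${\mathcal G}(y)\le{\mathcal G}(cv)$ holds there by the monotonicity of ${\mathcal G}$ on $[z,cv]$.
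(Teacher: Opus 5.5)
Your overall plan (trap the orbit in the curvilinear triangle, then in a sublevel set of $L$) is the one the paper has in mind; its written proof only extracts $X$ from $L(X,z)=L(w,cv)$. But the step joining the two phases is wrong.

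\textbf{Main gap.} The identity $L(x_0,y_0)={\mathcal G}(y_0)$ holds only when $x_0=w$; for $x_0<w$ one has ${\mathcal H}(x_0)>0$. At the exit point $(x_-(y_1),y_1)$ on the $x_-$-arc, with $y_1\in[z,cv]$ and $x_-(y_1)\in[v,w]$, the honest a priori estimate is only $L\le{\mathcal H}(v)+{\mathcal G}(cv)=L(v,cv)$. This is strictly weaker than the stated level whenever $v<w$. In predator--prey model II, $L(v,cv)=\bigl(\log\tfrac32-\tfrac13\bigr)+\bigl(\tfrac12-\log\tfrac32\bigr)=\tfrac16$, while ${\mathcal G}(3/2)\approx 0.095$; the resulting bound would be about $2.04$ rather than $1.75$.

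The missing step is to follow the orbit past the exit up to the vertical line $x=w$. Consider the region $\{z\le y\le y_1,\ x_-(y)\le x\le w\}$. There $y'\le 0$, since you are right of the $H=0$ isocline and $H_x<0$. Also $x'>0$, since $G(x,y)>G(w,y)\ge G(w,z)=0$ for $x<w$, $y\ge z$; this uses $G_x<0<G_y$, see below. The field points inward along the $x_-$-arc, the orbit cannot leave through the top $y=y_1$, and the region closes up at $(w,z)$ because $x_-(z)=w$. Hence the orbit either tends to $(w,z)$ with $x\le w$ throughout, or first meets $x=w$ at a height $y_2\in[z,y_1]\subset[z,cv]$. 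At that point $L={\mathcal G}(y_2)\le{\mathcal G}(cv)=L(w,cv)$, which is exactly the level in the paper's defining equation. Your final step then goes through: $x\le w$ before this time, and ${\mathcal H}(x)\le{\mathcal G}(cv)$ after it.

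\textbf{Further corrections.} First, ${\mathcal G}(y_0)\le{\mathcal G}(cv)$ requires $y_0\ge z$, not merely $y_0\le cv$, since ${\mathcal G}\to\infty$ as $y\to0^+$ in every example. Monotonicity of ${\mathcal G}$ on $[z,cv]$ also requires $G_y>0$. With the printed hypothesis $G_y<0$, ${\mathcal G}'=G^w/h$ would make $z$ a maximum of ${\mathcal G}$, contradicting ${\mathcal G}>0$ off $z$. So the hypothesis must be read as $G_x,H_x<0<G_y$, $H_y\le 0$, as in the linearization lemma and all the applications. Checking ``via $G_y<0$'' would give the opposite ordering.

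Second, on the ray the sign of $x'$ is irrelevant, and in any case the ray lies left of $x_+$, where $x'>0$, not $x'<0$. The inequality $y'\le cx'$ already gives $\tfrac{d}{dt}(y-cx)\le 0$ there. What you still owe is that the unstable manifold starts on the side $y\le cx$. Put $F=hH-cgG$; then $F(x,cx)\le 0$, and by the relation in Lemma~\ref{Lem}, $F_y(0,0)=g(0)G_x(0,0)<0$. So $\Phi=y-cx$ satisfies $\dot\Phi\le F_y(x,\xi)\,\Phi$. Near $t=-\infty$ this means $\Phi$ decreases wherever it is positive, which together with $\Phi\to0$ forces $\Phi\le 0$; the same inequality (Gronwall) keeps $\Phi\le 0$ afterwards.

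Finally, $y\le cv$ inside the triangle is a geometric fact: the ray and the $x_-$-arc meet at $(v,cv)$. It does not follow from the sign of $y'$, which is positive there.
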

\begin{figure}[h]
\includegraphics[height=6cm]{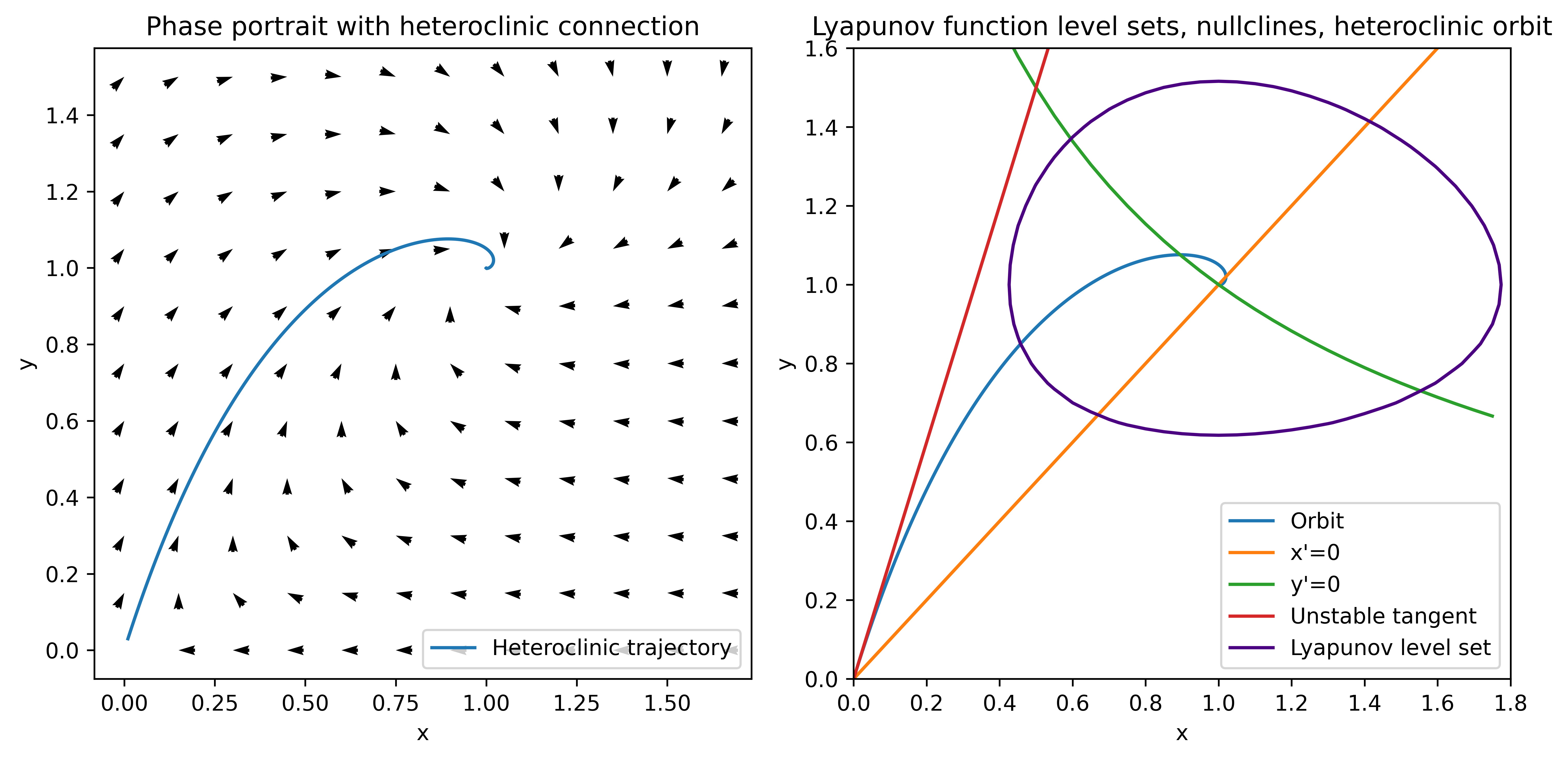}
\caption{Typical phase portrait demonstrating the intersection of the isoclines at the equilibrium $(w,z)$. The $y'=0$ isocline intersects the unstable tangent of the heteroclinic orbit at the point $(v,cv)$.}
\end{figure}
\begin{proof}
Let the point $v$ be defined such that $H(v,cv)=0$. We define $X$ as the maximal x-coordinate solution to the Lyapunov function level equation
\[
L(X,z)=L(w,cv)\,.
\]
Because $L(x,y)={\mathcal H}(X)+{\mathcal G}(z)$, this implies
\[
{\mathcal H}(X)+{\mathcal G}(z)={\mathcal H}(w)+{\mathcal G}(cv)
\]
The explicit formula for $X$ follows directly from this equality, the zero properties of the functions at the equilibrium, and the strict monotonicity of ${\mathcal H}$ on the interval $[w,\delta)$.
\end{proof}

\section{Application to astrophysics}
In this section we shall show that the general theory presented in previous section can be applied to astrophysical model, both in relativistic and classical cases, cf. \cite{BHN, CMP, JDE, MMAS, DCDS, bibref1, S5K, TMNA}. This implies a bound for the mass-radius ratio in the original system describing particles, involving both Einstein equation (resulting in Tolman-Oppenheimer-Volkoff equation) and the Smoluchowski-Poisson equations.
Consider $H(x,y)=a(x)-b(x)y$ and $h(y)=y$ while $G(x,y)=y-x$ and $g(x)=1$ like in \cite{TMNA} where special case was treated. Then the Lyapunov function, for $A'=a$, $B'=b$, $a(z)=zb(z)$, and $A(z)=B(z)=0$, reads
\[
L=zB(x)-A(x)+y-z-z\log(y/z)\,.
\]
which is consistent with our calculations as ${\mathcal H}(x)=zB(x)-A(x)$ and ${\mathcal G}(y)=y-z-\log(y/z)$. Moreover, the estimate obtained in \cite{TMNA},
\[
X={\mathcal H}|_{[w,\delta}^{-1}((a(0)+1)w-z-z\log ((a(0)+1)w/z))
\]
is consistent with the estimate from our main theorem. 
\subsection{Classical case}
The estimate presented here can be understood as the Newtonian limit for the integrated density of the particles expressed by the Smoluchowski-Poisson equation in radial symmetry and a static case, cf. \cite{BHN, CMP, JDE}.
Let $a(x)=2-x$ and $b(x)=0$. Then $A(x)=2x-2-x^2/2$ related by $A'=a$ and the Lyapunov function reads
\[
2L=(x-2)^2+2y-4\log(y)+4\log 2-4\,.
\]
The function $2{\mathcal H}(x)=(x-2)^2$ and ${\mathcal G}(y)=y-2-2\log (y/2)$. Therefore 
\[
X=2+2\sqrt{2-\log 3}<4\,.
\]

\subsection{Relativistic case}
Here, spherically symmetric and static solutions to the Einstein equations lead to the Tolman-Oppenheimer-Volkoff equation. After using Milne variables, we obtain the system with the data considered below.
Moreover, set
$(1-8\pi x)a(x)=2-24\pi x$ and $(1-8 \pi x)b(x)=8\pi$ as for the relativistic case treated in \cite{MMAS, DCDS, TMNA}. 
Then
${\mathcal G}=y-(1+\ln(16\pi y))/(16\pi )$ and
\[16\pi {\mathcal H}=-48\pi x-3\log (1-8\pi x)+3-3\log(2).\]
Finally, 
$16\pi {\mathcal H}(x) = -3+3\log (-W_{-1}^{-1}(16\pi x -2))$, where $W_{-1}$ is the lower branch of the Lambert W  function (or product log), defined as the inverse function of $W_{-1}^{-1}(y)=y\exp(y)$. Consequently, 
\[16\pi {\mathcal H} ^{-1}(y)=2+W_{-1}(-\exp(1+16\pi y/3)).\]
Therefore, the bound for the heteroclinic trajectory reads $2X=2+W(-2^{1/3}e^{-4/3})$, as in \cite{TMNA}.
\begin{figure}[h]
\includegraphics[height=5.2cm]{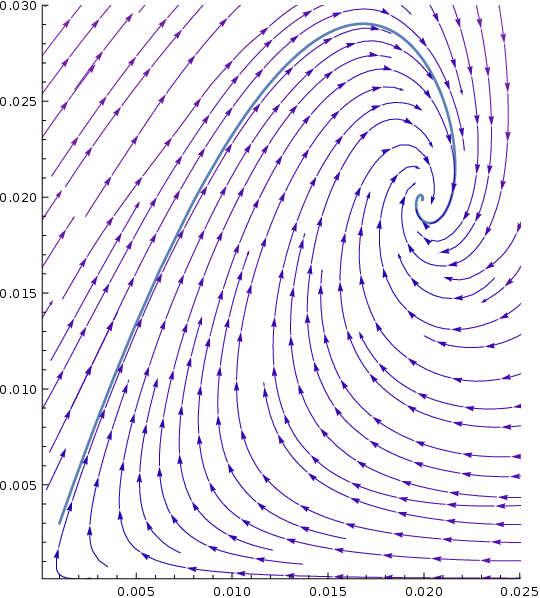}
\includegraphics[height=5.2cm]{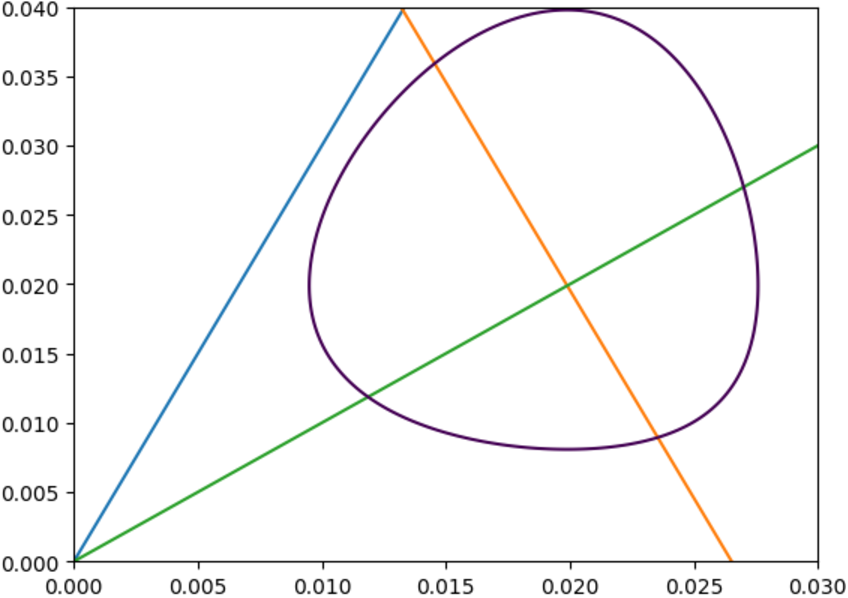}
\caption{Heteroclinic trajectory with phase portrait. In the right panel: \\\textcolor{blue}{unstable tangent manifold}, \textcolor{orange}{isocline y'=0}, \textcolor{olive}{isocline x'=0}, \textcolor{violet}{the Lyapunov level set}.}
\end{figure}

\section{Application to biology}
As announced in the introduction, this section arises from the generalized Kolmogorov system considered in special case in \cite{bibref7}, while the motivation arise from \cite{MI}, utilizing theory borrowed from \cite{bibref2}.
\subsection{Predator-prey model I}
Consider $H(x,y)=\alpha /(1+\kappa x)-\beta yx$ and $h(y)=y$ while $G(x,y)=y-x$ and $g(x)=1$. Then for $\alpha=6$ and $\kappa=\beta=2$ we have $c=7$ and $3=7v^2(1+2v)$. Moreover, 
\[
{\mathcal H}(x)=x^2-1+3\log 3- 3\log(2x+1)
\]
and
\[
{\mathcal G}(y)=y-\log y -1
\]
whence the following Lyapunov function formula follows
\[L=x^2-1-3\log(2x+1)+3\log(3)+y-1-\log(y)\]

\begin{figure}[h]
\includegraphics[height=6cm]{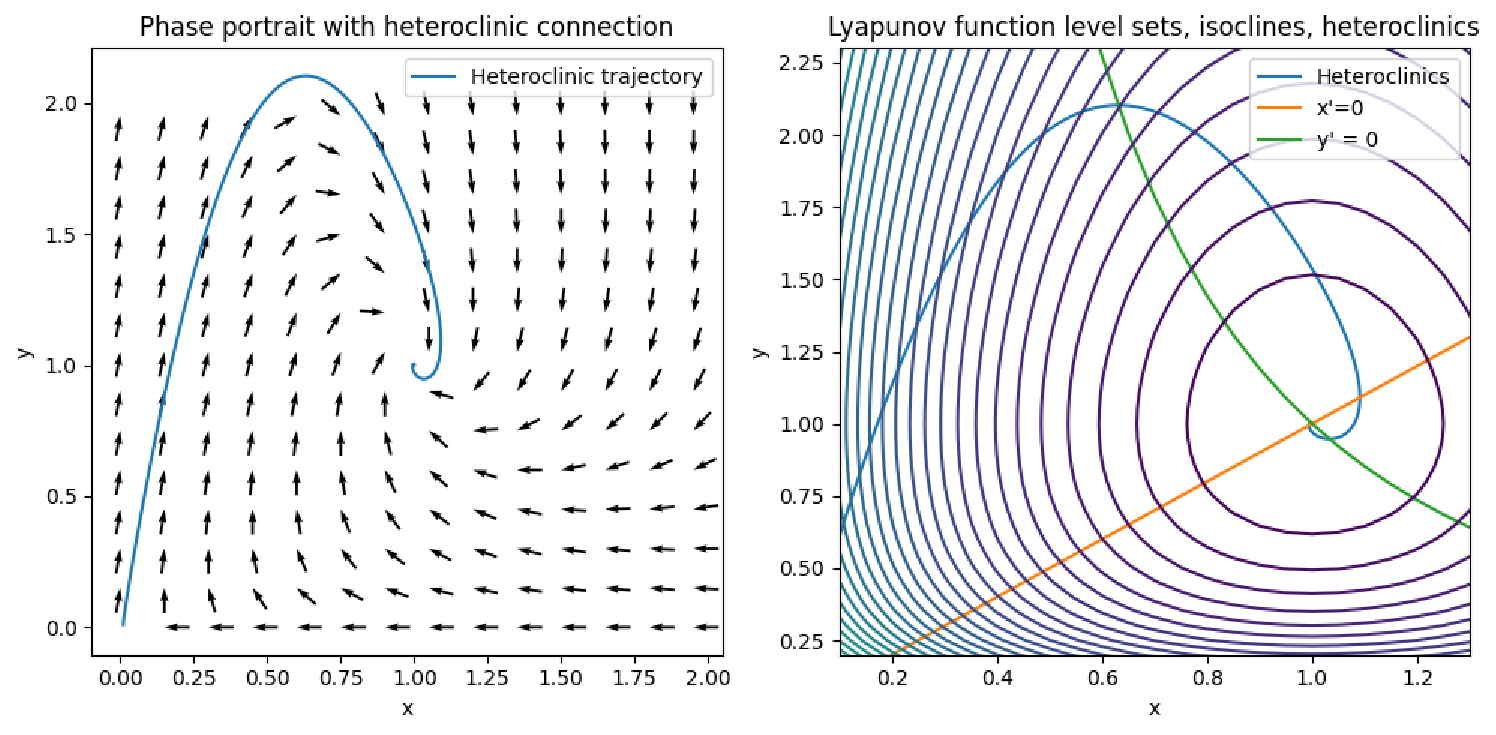}
\caption{Figures for $\beta=\kappa=2$ and $\alpha=6$ in predator-prey model}
\end{figure}

\subsection{Predator-prey model II}
\begin{figure}[h]
\includegraphics[height=6cm]{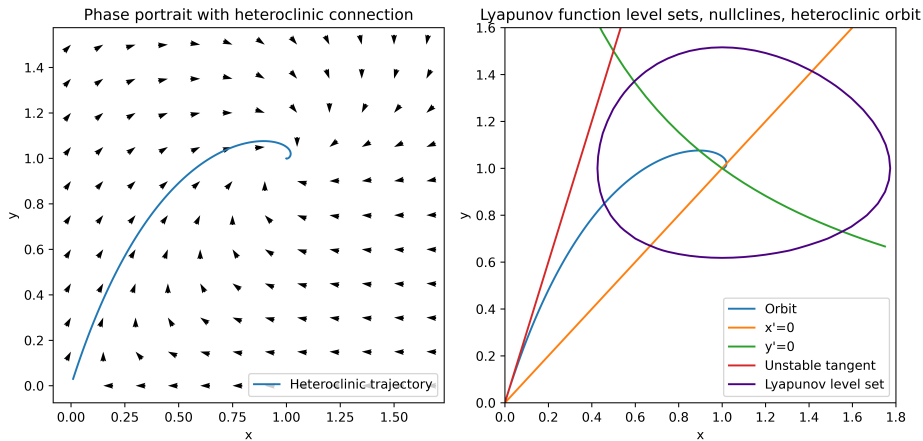}
\caption{Predator-prey model with bound by Lyapunov function}
\end{figure}

Let $H=2/(1+2x)-2y/3$ and $G=y-x$. Moreover, $g=1$ and $h=y$.
Note that then $c=3$ is the tangent of the angle at which heteroclinic trajectory starts from $(0,0)$ and ends at $(1,1)$. Next, we calculate $H(v,3v)=0$ to get $2v=1$. Consequently using ${\mathcal G}(y)=y-\log(y)-1$ and $3{\mathcal H}(x)=2x-2+3\log3-3\log(2x+1)$ with ${\mathcal H}(X)={\mathcal G}(3v)$, we obtain
\[
2X=-3W_{-1}\left( -\exp(-1-{\mathcal G}(3/2))\right)-1
\]
where $W_{-1}$ is the lower branch of the Lambert W function. Substituting ${\mathcal G}(3/2)=1/2-\log(3/2)$, we obtain
\[
X=-\frac{3}{2}W_{-1}\left( -\left(\frac32\right) e^{-3/2}\right)-\frac12=1.75.
\]
This can be interpreted as the bound for the heteroclinic trajectory and for all data starting in the neighbourhood of zero under this separatrix. This can be understood as a control on the number of predators if, at the beginning, their number is less than the number of prey but not less than $1/3$ the prey's number.

To understand the dynamics of $x$, let's look at what happens if the prey goes extinct ($y=0$): $x'=-x$.
The predator $x$ cannot survive without the prey.
\begin{itemize}
\item {\bf Natural Mortality}: The $-x$ term represents the predator's natural death rate. Without prey, the predator population decays exponentially to zero.
\item {\bf Prey-Dependent Reproduction}: The $+y$ term is the predator's growth rate. This is slightly unusual compared to the classic Lotka-Volterra model (which uses an $xy$ "encounter rate" term and will be addressed in the next section). Because growth is dictated simply by $+y$, it implies the predator's reproduction is directly proportional to the sheer abundance of prey in the environment, rather than the random rate at which individual predators and prey bump into each other.
\end{itemize}

\noindent
Recall the Prey Equation: $y'=6y/(1+2x)-2y^2$. To understand dynamics of $y$, let's first look at what happens if the predator goes extinct ($x=0$): $y'=6y-2y^2$. The prey population $y$ can survive and grow on its own. Specifically:
\begin{itemize}
\item {\bf Logistic Growth}: In the absence of predators, the prey undergoes standard logistic growth. We can factor it as 
$y'=2y(3-y)$, which means it has an intrinsic growth rate of $6$ and an environmental carrying capacity of $3$.
\item {\bf Predator Suppression}: Now look at the interaction term  $6/(1+2x)$. As the predator population $x$ increases, the denominator grows, causing this entire term to shrink. The presence of the predator suppresses the prey's intrinsic growth rate. Because the predator is in the denominator, this models a "saturation" effect—the predators interfere with each other when their numbers are high, meaning their negative impact on the prey has a mathematical limit.
\end{itemize}

\subsection{Predator-prey model III}
In this section we shall consider more realistic predator-prey model in which the growth of predator is dependent on their number. The general approach for Kolmogorov system shall apply, yielding Lyapunov function and one point global attractor, yet no heteroclinic trajectory can be found in this case. Consider  $g(x) = \gamma x, h(y) = \delta y, G(x,y)=\delta y-\gamma$ and $H(x,y) = \alpha (1-y/m) - \alpha x\,.$ Then stationary solution apart from $(0,0)$ reads $(w,z)=(1-\gamma/(m\delta),\gamma/\delta)$.

We start by rewriting the formula as follows:

    \begin{align}
        \left\{
        \begin{array}{l}
        x' = -\gamma x + \delta xy, \\
        y' = \alpha (1-y/m)y - \alpha xy.
        \end{array}
        \right.
    \end{align}

    Then ${\mathcal G}(y)=y-\frac{\gamma}{\delta}\log (\frac{\delta y}{\gamma})-\gamma/\delta$ and 
    ${\mathcal H}(x)=\alpha x/\gamma +\left(\frac{\alpha}{m\delta}
-\frac{\alpha}{\gamma} \log (x)\right)$   therefore the 
    Lyapunov function reads
    \[
    L(x,y) = y - \frac{\gamma}{\delta} \log y - \frac{\alpha}{\delta} \left[ \left(1-\frac{1}{m}\cdot \frac{\gamma}{\delta} \right) \log x - x\right]-C,\]
where $m = \frac{1}{(\delta/\gamma - 1)}$ and
\[
C=\left[\frac{\gamma}{\delta} \left(1-\frac{\alpha}{\delta}\right) - \log \frac{\gamma}{\delta}\left(\frac{\gamma-\alpha}{\delta} - \frac{\alpha \gamma}{m \delta^2}\right) \right]
\]

We get the Jacobian matrix for the stationary point $P = ( 1- \frac{1}{m} \cdot \frac{\gamma}{\delta}, \gamma / \delta)$

\begin{align}
J(P) =
\begin{bmatrix}
0 & \delta \left(1 - \frac{1}{m} \cdot \frac{\gamma}{\delta} \right) \\
-\alpha \cdot \frac{\gamma}{\delta} & -\alpha \cdot \frac{1}{m} \cdot \frac{\gamma}{\delta}
\end{bmatrix}.
\end{align}

As the matrix is quite complex we will use determinant and trace of the Jacobian to determine the stability of this stability point. Let $J$ be Jacobian matrix for stationary point be then the characteristic equation of the matrix is

\begin{align}
\lambda^2 - (\text{tr}(J))\lambda + \det(J) = 0.
\end{align}

As such, we can determine the stability of the stationary point using trace and determinant. When
  $\text{tr}(J) < 0, \det(J) > 0, \Delta = \text{tr}(J)^2 - 4\det (J)< 0$ then the stationary point is an stable spiral. When 
  $\text{tr}(J) < 0, \det(J) > 0, \Delta = \text{tr}(J)^2 - 4\det (J)> 0$ then the stationary point is a stable node.

The trace and determinant of the Jacobian are:
\begin{align}
\text{tr}(J) = -\alpha \cdot \frac{1}{m} \cdot \frac{\gamma}{\delta} < 0,
\end{align}
\begin{align}
\det(J) = \alpha \gamma \left(1 - \frac{\gamma}{m \delta} \right).
\end{align}

Note that:
\begin{align}
1 - \frac{\gamma}{m \delta} = x^* > 0 \Rightarrow \det(J) > 0.
\end{align}

Since $\det(J) > 0$ and $\text{tr}(J) < 0$ it follows that the equilibrium point $P$ is locally asymptotically stable. It can be:
    \begin{itemize}
        \item a stable spiral (if $\Delta < 0$),
        \item a stable node (if $\Delta > 0$). 
    \end{itemize}
Because the stability of this equilibrium depends entirely on whether $\Delta$ is positive or negative, the system can smoothly transition from spiraling towards the equilibrium (oscillatory damping) to dropping directly into it (overdamped).

\begin{figure}[h]
\includegraphics[width=\textwidth,height=0.40\textheight,keepaspectratio]{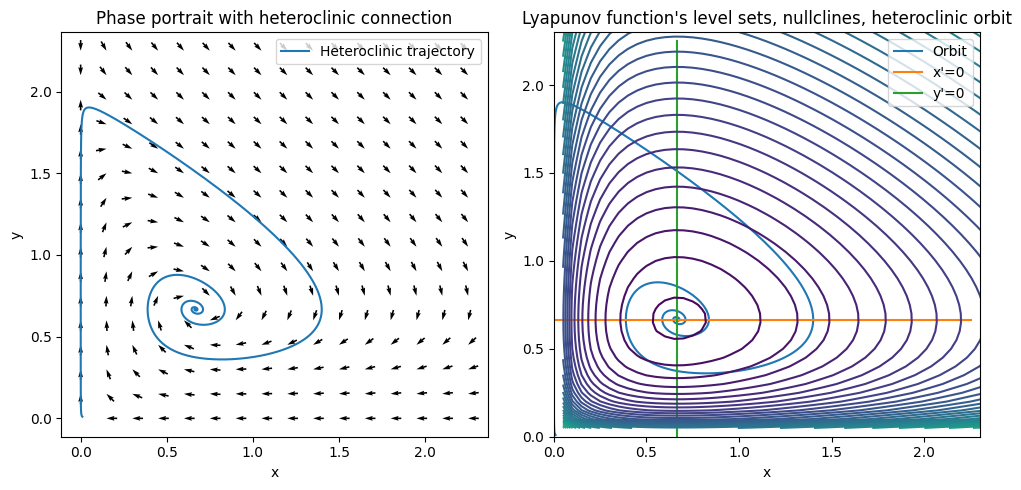}
\caption{Figures for biological predator-prey model with spiral behaviour}
\end{figure}

\section{Acknowledgements}
We would like to thank the students Magdalena Śmiałowicz and Łukasz R\c{e}bisz for the help with numerical simulations carried over with the aid of Python.

\bigskip


\noindent Robert Stańczy (corresponding author)\\
stanczr@math.uni.wroc.pl\\
ORCID: \url{https://orcid.org/0000-0003-3317-2012} \bigskip 

\noindent {\small
\noindent Uniwersytet Wrocławski\\
Instytut Matematyczny\\
Pl. Grunwaldzki 2/4, 50-384 Wrocław, Poland
}\bigskip

\noindent Dorota Bors\\
dorota.bors@wmii.uni.lodz.pl\\
ORCID: \url{https://orcid.org/0000-0001-9546-6978} \bigskip

\noindent {\small
\noindent University of Lodz\\
Faculty of Mathematics and Computer Science\\
Banacha 22, 90-238 Lodz, Poland
}\bigskip

\end{document}